
\documentclass[11pt, a4paper]{article}

\usepackage{upref,amsmath,amssymb,amsthm}
\usepackage{amsmath}
\usepackage{graphicx}
\usepackage[swedish,english]{babel}

\usepackage{amsmath,amssymb,upref,amsthm,eucal}
\usepackage{graphics}
\usepackage[swedish,english]{babel}
\newtheorem{theorem}{Theorem}
\newtheorem{lemma}[theorem]{Lemma}

\newtheorem{proposition}[theorem]{Proposition}
\theoremstyle{remark}

\newcommand{\XB}{\mathbf{X}}
\newcommand{\YB}{\mathbf{Y}}
\newcommand{\UB}{\mathbf{U}}
\newcommand{\VB}{\mathbf{V}}

\begin{document}

\title{A consistent estimator of the smoothing operator in the functional Hodrick-Prescott filter}

\author{  Hiba Nassar\thanks{School of Computer Science, Physics and Mathematics, Linnaeus University, Vejdesplats 7, SE-351 95 V\"axj\"o, Sweden. e-mail: hiba.nassar@lnu.se}}

\date{\today}
 \maketitle
 
\begin{abstract}
In this paper we consider a version of the functional Hodrick-Prescott filter for functional time series.  We show that the associated optimal smoothing operator preserves the 'noise-to-signal' structure. Moreover, we propose a consistent estimator of this optimal smoothing operator.

\end{abstract}
\bigskip
\noindent
\small{ {\it JEL classifications}: C5, C22, C13, E32.}
\medskip

\noindent \small{ {\it AMS 2000 subject classifications}: 62G05, 62G20.}

\medskip
\noindent
\small{{\it Key words and phrases}: Inverse problems, adaptive estimation, Hodrick-Prescott filter, functional time series, smoothing, signal extraction, Hilbert space-valued Gaussian random variables.}

\section{Introduction}
The study of functional time series has attracted a large amount of research in the functional data analysis literature (see for example, H\"ormann and Kokoszka (2010), Horv\'ath et al. (2010), Horv\'ath and Kokoszka (2012)). Functional time series can be obtained by splitting an almost continuous time record of measurements (functional data) into natural consecutive intervals, and the measurements in each interval are treated as a whole observational unit.

\medskip\noindent For simplicity, a functional time series can be seen as a sample consists of $n$  curves $X_1, X_2, ... , X_n$ belonging to the space $L^2([0,1])$ of square integrable functions on $[0,1]$, where $X_i  = \{X_i (t );  0 \le  t  \le 1 \}$, $i  = 1,2,..,n$; $i $ refers to a day, a month or a year, and $t$  is the time within that unit interval representing a sufficiently dense grid of times (tics, seconds, etc.) at which an observation $X_n(t)$ is made.

\medskip\noindent 
The classical Hodrick-Prescott filter (called henceforth the HP filter) was introduced by Hodrick and Prescott (1997) and since then it has been used widely in economics and actuarial science.  The classical Hodrick-Prescott filter was proposed  as a procedure of extracting a 'signal' (also called trend in the economic literature) $y(\alpha,x)=(y_1(\alpha,x),\ldots, y_T(\alpha,x))$ from a  real-valued, noisy time series $x=(x_1,\ldots,x_T)$ and an appropriately chosen positive parameter $\alpha$, called the smoothing parameter. They suggested that the real-valued time series $(x,y)$  satisfies the following linear mixed model:
\begin{equation}\label{M-HP}
\left\{\begin{array}{lll}
x=y+u,\\
Py=v,
\end{array}\right.
\end{equation}
where $u\sim N(0,\sigma_u^2I_T)$ and $v\sim N(0,\sigma_v^2I_{T-2})$ ($I_T$ and $I_{T-2}$ denote the $ T \times T$ and $(T-2) \times (T-2)$ identity matrices, respectively) and $P$ is the discrete second order differencing operator $(Py)(t):=y_{t +2}-2 y_{t +1}+ y_{t}, \, t=1,\ldots,T -2$. For an appropriate smoothing parameter, the 'optimal smooth' signal associated with $x$ is the minimizer of the following functional
\begin{equation}\label{HP}
\sum_{t=1}^T(x_t-y_t)^2+\alpha\sum_{t=1}^{T-2}( y_{t +2 }-2 y_{t+1}+ y_{t})^2,
\end{equation}
with respect to $y=(y_1,\ldots,y_T)$. Using the model (\ref{M-HP}) above, the optimal smoothing parameter turns out to be the so-called 'noise-to-signal ratio', i.e. $\alpha^*=\sigma_u^2/\sigma_v^2$.  Schlicht (2005) proved that the noise-to-signal ratio satisfies
\begin{equation}\label{opt-alpha-bis}
E[\, y|\,x]= y(\frac{\sigma_u^2}{\sigma_v^2},x)
\end{equation}
and is optimal in the sense that (see Dermoune {\it et al.} (2009)) the optimal smoothing parameter  minimizes the mean square difference between the 'optimal signal' $y(\alpha,x)$ and the conditional expectation $E[\, y|\,x]$ which is the best predictor of any signal $y$ given the time series $x$, namely
 \begin{equation}\label{opt-alpha}
\sigma_u^2/\sigma_v^2=\arg\min_{\alpha}\left\{\|E[\, y|\,x]-y(\alpha,x)\|^2\right\}.
\end{equation}
Moreover, Dermoune {\it et al.} (2009) derived an explicit unbiased consistent estimator of the smoothing parameter. They dealt with the observations from the Gaussian time series $Px$: 
$$
Px = v + Pu \sim N(0,\sigma_v^2I_{T-2} + \sigma _u ^2 P P^{'}),
$$
to suggest the following consistent estimators of the variances $\sigma_u^2$ and $\sigma_v^2$: 

\begin{equation}
\label{sigmauest}
\hat{\sigma}_u^2= \frac{1}{4(T-3)} \sum_{j=1}^{T-3} { Px(j) Px(j+1)}, 
\end{equation}

and
\begin{equation}
\label{sigmavest}
\hat{\sigma}_v^2= \frac{1}{(T-2)} \sum_{i=1}^{T-2} { Px(j) ^2} +\frac{3}{2(T-3)} \sum_{j=1}^{T-3} { Px(j) Px(j+1)}.
\end{equation}

\medskip\noindent Due to the noise-to-signal ratio, the optimal smoothing parameter $\alpha^*$ admits the following consistent estimator 
\begin{equation}
\label{alphaest}
\hat{\alpha}= - \frac{1}{4} \left( \frac{3}{2} + \frac{(T-3) \sum_{j=1} ^{T-2} { Px(j) ^2}}{ (T-2) {\sum_{j=1}^{T-3} { Px(j) Px(j+1)}}} \right)^{-1}.
\end{equation}

\medskip\noindent  Djehiche and Nassar (2013)  suggested a  functional  version of the Hodrick-Prescott filter for which the data $x$ take values in a possibly infinite dimensional Hilbert space. The functional HP filter is described as a mixed model of the same form as  (\ref{M-HP}), where the second backward difference operator $P$ is replaced by a compact operator $A$.  They characterized the optimal smoothing parameter determined by a criterion similar to (\ref{opt-alpha})  where the noise $u$ and the  signal $v$ are independent Hilbert space-valued Gaussian random variables with zero means and covariance operators $\Sigma_u$ and $\Sigma_v$.

\medskip\noindent Furthermore, Djehiche {\it et al.} (2013) extended the functional Hodrick-Prescott filter  to the case where the operator $A$ is closed and densely defined with closed range.

\medskip\noindent  In this paper, we extend the functional Hodrick-Prescott filter to the case of functional time series, using the second order differencing operator. We also characterize the optimal smoothing operator, based on the optimality criterion suggested in Djehiche and Nassar (2013). Moreover, we suggest  an explicit and consistent estimator of the optimal smoothing operator.
 
\medskip\noindent
The paper is organized as follows. In Section 2, we introduce the Hodrick-Prescott filter for functional time series. In Section 3, we prove that the optimal smoothing operator preserves the noise-to-signal ratio structure. In Section 4, we propose a consistent estimator of the optimal smoothing operator.
 
\medskip\noindent  Functional time series such as: intradaily financial transactions, Geophysical data, magnetometer data (for further details, see \cite{Kokoszka1})  are good examples to apply this study.

\section{The functional Hodrick-Prescott filter for functional time series } 
In this section, we propose the functional Hodrick-Prescott filter for functional time series, where the observations $\XB$ can be seen as a vector of $n$ entries of functional data.

\medskip\noindent Let $\XB = \left( \XB_1, \XB_2, ..., \XB_n\right)$ be a functional time series, where $ \XB_i \in L^2([0,1]) $ for $i= 1,2,...,n$. The functional Hodrick-Prescott filter for functional time series is a procedure to reconstruct an 'optimal smooth signal' $\YB = \left( \YB_1, \YB_2, ..., \YB_n\right)$ that solves an equation  
\begin{equation}
P \YB _m=  \YB_ {m+2} -2 \YB _{m+1} + \YB_m = \VB _m, \qquad m= 1,2,..., n-2,
\end{equation}
 corrupted by a noise $\VB = \left( \VB_1, \VB_2, ..., \VB_{n-2}\right)$  which is apriori unobservable, from observations $\XB$ corrupted by a noise $\UB = \left( \UB_1, \UB_2, ..., \UB_n\right)$ which is also apriori unobservable:
\begin{equation}\label{F-HP}
\left\{\begin{array}{lll}
\XB=\YB+\UB,\\
 P \YB=\VB.
\end{array}\right.
\end{equation}

\medskip\noindent  The second order backward shift operator $P$ can be written in vector form as the following $ (n-2) \times  n$-matrix 

\[ \left ( \begin{matrix}
  1 & -2 & 1 & 0 &...& ...& 0 \\
  0 & 1 & -2 & 1 &... & ...& 0 \\
	0 & 0 & 1 & -2 & 1  & ...& 0 \\
  	&\ldots  &  \ldots &  \ldots &  \ldots & & \\
  0 & 0 & 0 & 0 & 1 & -2 & 1
 \end{matrix}
\right). \]

\medskip\noindent Following \cite{djehiche3}, let $B: L^2([0,1]) \rightarrow L^2([0,1])$ be a smoothing operator, which is  linear, bounded and positive. The optimal smooth signal, associated with $\XB$, is obtained by regularizing the system (\ref{F-HP}): 
\begin{equation}\label{HP-H-trend}
\YB(B,\XB) := \arg \min_\YB \left\{ \sum_{i=1}^n {\left\| \XB_i -\YB_i \right\|^2_{ L^2([0,1])}} + \sum_{i=1}^{n-2} { \langle (P \YB)_i, B (P \YB)_i \rangle_{ L^2([0,1])}} \right\},
\end{equation} 
provided that 
$$
\langle h , B h  \rangle_{L^2([0,1])} \ge 0, \qquad h\in L^2([0,1]).
$$

\medskip\noindent To find the optimal smoothing operator, we will use the selection criterion in \cite{djehiche3}, namely: 
\begin{equation}\label{B-best}
\hat B = \arg \min_B \left\| E[\YB|\XB] - \YB(B ,\XB) \right\|^2.
\end{equation}

\medskip\noindent This selection criterion minimizes the difference between the optimal solution $\YB(B , \XB)$, and the conditional expectation $E[\YB|\XB]$.

\medskip\noindent The main aim of this work is to derive a consistent estimator of the optimal smoothing operator $B$, extending the results of \cite{djehiche1} to infinite dimensional case.

\medskip\noindent Let $\{e_1, e_2, ...,e_j, ...\}$ be an orthogonal basis in $L^2([0,1])$ (a well known example of a basis in $L^2([0,1])$ is $ \{ e_i(t) = \sqrt {2}\sin (i \pi t )\},  i= 1,2,...$). $\XB_i, \YB_i, \UB_i$ admit the following representation for $i= 1,2,...,n$, respectively:
$$
\XB_i = \sum _{j=1} ^\infty  {x_{i}^j e_j }, \qquad \YB_i = \sum _{j=1} ^\infty  {y_{i} ^j e_j }, \qquad \UB_i = \sum _{j=1} ^\infty  {u_{i} ^j e_j },
$$
and $\VB_i$ admits similar representation for $i= 1,2,...,n-2:$
$$
\VB_i = \sum _{j=1} ^\infty  {v_{i}^j e_j }.
$$
\medskip\noindent For arbitrary $j$  the projectors of  $\XB, \YB, \UB$ and $\VB$ onto the eigenspace $\overline {\mbox {Span} \{e_j\} }$, can be seen as the following vectors in $\mathbb{R}^n$ and $\mathbb{R}^{n-2}$, respectively:
$$ 
\bar{X}^j = (x_{1}^j, x_{2}^j, ..., x_{n}^j ), \qquad \bar{Y}^j = (y_{1}^j, y_{2}^j, ..., y_{n}^j ), \qquad \bar{U}^j = (u_{1}^j, u_{2}^j, ..., u_{n}^j ), 
$$
and 
$$
\bar{V}^j = (v_{1}^j, v_{2}^j, ..., v_{n-2}^j ).
$$
Hence 
$$ 
\XB = \sum_{j=1} ^\infty {\bar{X}^j \vec{e}^{(n)}_j}, \qquad \YB = \sum_{j=1} ^\infty {\bar{Y}^j \vec{e}^{(n)}_j}, \qquad \UB = \sum_{j=1} ^\infty {\bar{U}^j \vec{e}^{(n)}_j}, 
$$
and 
$$
\VB = \sum_{j=1} ^\infty {\bar{V}^j \vec{e}^{(n-2)}_j},
$$
where, 
$$\vec{e}^{(n)}_j = \left( (e_j,0,...,0)_{1\times n}^{'}, (0,e_j,...,0)_{1\times n}^{'}, ..., (0,0,...,e_j)_{1\times n}^{'} \right)_{1 \times n},$$
 and 
 $$\vec{e}^{(n-2)}_j = \left( (e_j,0,...,0)_{1\times n-2}^{'}, (0,e_j,...,0)_{1\times n-2}^{'}, ..., (0,0,...,e_j)_{1 \times n-2}^{'} \right)_{1\times n-2}.$$
\medskip\noindent Moreover, the system (\ref{F-HP}) yields that for every $j$ 
\begin{equation}\label{P-HP}
\left\{\begin{array}{lll}
\bar{X}^j=\bar{Y}^j+ \bar{U}^j,\\
 P \bar{Y}^j =\bar{V}^j,
\end{array}\right.
\end{equation}
where $P$ is  the real second order differencing operator 
\begin{equation}
P \bar{Y}^j_m := y_{m+2}^j -2 y_{m+1}^j +y_m^j ; \qquad   m = 1, \ldots , n-2. 
\end{equation}

\medskip\noindent Since the smoothing operators $B$ is linear and  bounded, by Riesz' Representation Theorem, there exist uniquely determined $ \alpha_j>0,\, j=1,2,\ldots$, such that
\begin{equation}\label{smoothing}
 B h = \sum_{j=1} ^\infty {\alpha_j \langle h,e_j\rangle e_j},\qquad h\in L^2([0,1]),
 \end{equation} 
where the sum converges in the operator norm.

\medskip\noindent Following Dermoune {\it et al.} (2008), the optimal smoothing signal associated with $\bar{X}^j$ can be found as in (\ref{HP}) i.e.

\begin{equation}\label{HP-j}
\bar{Y}^j( \alpha_j, \bar{X}^j) =\arg \min_{y^j} \left\{ \sum_{i=1}^n(x_i^j-y_i^j)^2+\alpha_j\sum_{i=1}^{n-2}( y_{i+2}^{ j}-2 y_{i+1}^{j}+ y_i^{j})^2\right\}.
\end{equation}
The minimizer is given by the formula
\begin{equation}
\bar{Y}^j( \alpha_j, \bar{X}^j) = (y_1^j( \alpha_j, \bar{X}^j),y_2^j( \alpha_j, \bar{X}^j),...,y_n^j( \alpha_j, \bar{X}^j))= ( I_ {n} + \alpha_j P^{'} P )^{-1} \bar{X}^j.
\end{equation}
We have
\begin{equation}
\label{YBapprox}
\sum_{j=1}^\infty {\bar{Y}^j( \alpha_j, \bar{X}^j) \vec{e}^{(n)}_j} = ( I_ {H^{\otimes n}} + B P ^{'} P )^{-1} \XB.
\end{equation}
Set
\begin{equation}
\label{YBapproxfinal}
\YB(B, \XB)= ( I_ {H^{\otimes n}} + B P ^{'} P )^{-1} \XB.
\end{equation}
\begin {proposition}
Assume $B:L^2([0,1]) \rightarrow L^2([0,1])$ satisfies 
$$
\langle h , B h  \rangle_{L^2([0,1])} \ge 0, \qquad h\in L^2([0,1]).
$$ 
then (\ref{YBapproxfinal}) is the optimal smoothing signal which minimizes the functional 
\begin{equation}
\label{functionalmini}
J_B(\YB)=\sum_{i=1}^n {\left\| \XB_i -\YB_i \right\|^2_{ L^2([0,1])}} + \sum_{i=1}^{n-2} { \langle (P \YB)_i, B (P \YB)_i \rangle_{ L^2([0,1])}}.
\end{equation}
\end{proposition}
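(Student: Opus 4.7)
The plan is to reduce the infinite-dimensional minimization to the scalar Hodrick--Prescott problem (\ref{HP-j}) coordinatewise in the basis $\{e_j\}$ and then appeal to the explicit minimizer already recorded in that setting.

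First I would expand $\XB_i=\sum_j x_i^j e_j$ and $\YB_i=\sum_j y_i^j e_j$ and use the orthonormality of $\{e_j\}$ to rewrite the data-fidelity term as $\|\XB_i-\YB_i\|^2_{L^2([0,1])}=\sum_j (x_i^j-y_i^j)^2$. For the penalty, the diagonal representation (\ref{smoothing}) yields
\begin{equation*}
\langle (P\YB)_i,B(P\YB)_i\rangle_{L^2([0,1])}=\sum_{j=1}^\infty \alpha_j\bigl((P\bar{Y}^j)_i\bigr)^2,
\end{equation*}
and interchanging the two finite sums in $i$ with the $j$-sum gives the separation
\begin{equation*}
J_B(\YB)=\sum_{j=1}^\infty\left(\sum_{i=1}^n (x_i^j-y_i^j)^2+\alpha_j\sum_{i=1}^{n-2}\bigl((P\bar{Y}^j)_i\bigr)^2\right).
\end{equation*}
The positivity hypothesis $\langle h,Bh\rangle\ge 0$ applied to $h=e_j$ gives $\alpha_j\ge 0$, so every summand is non-negative and minimizing over $\YB$ amounts to minimizing each $j$-summand separately over $\bar Y^j\in\mathbb{R}^n$. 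Each such problem is exactly (\ref{HP-j}), whose unique minimizer is $\bar Y^j(\alpha_j,\bar X^j)=(I_n+\alpha_j P'P)^{-1}\bar X^j$. Reassembling these components against $\vec e^{(n)}_j$ via (\ref{YBapprox}) yields $\YB(B,\XB)=(I_{H^{\otimes n}}+BP'P)^{-1}\XB$, which is exactly (\ref{YBapproxfinal}).

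The main obstacle is purely bookkeeping: justifying the interchange of the infinite sum with the minimization. Because each bracketed summand is non-negative and depends only on the Fourier coefficients $\bar Y^j$, the termwise minima add up to the infimum of the full series, and the termwise minimizers recombine into a genuine element of $L^2([0,1])^n$ precisely because $(I+BP'P)^{-1}$ is bounded on $L^2([0,1])^n$ (here one uses that $B$ acting pointwise on function-valued coordinates commutes with $P'P$ acting on the vector index, so $BP'P$ is self-adjoint and non-negative and $I+BP'P$ is boundedly invertible). As a consistency check one can differentiate $J_B$ in the Gateaux sense at $\YB$ along an arbitrary direction $\PH$ and obtain the Euler equation $(I+P'BP)\YB=\XB$, which coincides with $(I+BP'P)\YB=\XB$ by the same commutation, and $J_B$ being a non-negative quadratic functional guarantees that this critical point is the unique global minimizer.
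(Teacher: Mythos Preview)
Your proposal is correct and follows essentially the same route as the paper: expand $\XB,\YB$ in the basis $\{e_j\}$, use orthonormality and the diagonal form (\ref{smoothing}) of $B$ to split $J_B$ into a sum over $j$ of classical scalar HP functionals (\ref{HP-j}), minimize each one by $(I_n+\alpha_j P'P)^{-1}\bar X^j$, and reassemble via (\ref{YBapprox}). Your extra remarks on the interchange of the infinite sum with the minimization, on why the reassembled minimizer lies in $L^2([0,1])^n$, and the Euler-equation consistency check go beyond what the paper spells out, but the underlying argument is the same coordinatewise reduction.
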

\begin{proof}
To prove $\YB ( B,\XB)$ is the minimizer of the functional $J_B(\YB)$ we will use the fact that $ \bar{Y}^j( \alpha_j, \bar{X}^j)$ is the minimizer of the functional 
$$
\sum_{i=1}^n(x_i^j-y_i^j)^2+\alpha_j\sum_{i=1}^{n-2}( y_{i+2}^{ j}-2 y_{i+1}^{j}+ y_i^{j})^2,
$$
for each $j$.
By a simple computation we have
\[
\begin{split}
J_B(\YB)& = \sum_{i=1}^n {\left\| \XB_i -\YB _i \right\|^2 _{ L^2([0,1])}} + \sum_{i=1}^{n-2} { \langle (P \YB)_i, B (P \YB)_i \rangle_{ L^2([0,1])}}\\
& = \sum_{i=1}^n { \sum_{j=1}^\infty {\left( x^j_i  -y_i^j \right)^2 }}  + \sum_{i=1}^{n-2} { \langle \sum_{j=1} ^\infty {\left(y_{i+2}^j -2 y_{i +1}^j +y_{i}^j\right)e_j}, \sum_{k=1} ^\infty { \alpha_k \left(y_{i+2}^k -2 y_{i +1}^k +y_{i}^k\right)e_k} \rangle}\\ 
& = \sum_{i=1}^n { \sum_{j=1}^\infty {\left( x_i^j  -y_i^j  \right)^2 }}  + \sum_{i=1}^{n-2} { \sum_{j=1} ^\infty { \alpha_j( y_{i+2}^{ j}-2 y_{i+1}^{j}+ y_i^{j})^2 }}\\
& \geq \sum_{j=1} ^\infty {\left( \sum_{i=1}^n(x_i^j-y_i^j( \alpha_j, X^j))^2+\alpha_j\sum_{i=1}^{n-2}( y_{i+2}^{ j}( \alpha_j, X^j)-2 y_{i+1}^{j}( \alpha_j, X^j)+ y_i^{j}( \alpha_j, X^j))^2 \right)}\\
& \geq J_B(\YB ( B,\XB)).
\end{split}
\]
\end{proof}

\section{Optimality of the noise-to-signal ratio}

We will now characterize the optimal smoothing operator, defined by (\ref{B-best}), associated with the Hodrick-Prescott filter (\ref{HP-H-trend}).

\medskip\noindent In (\ref{F-HP}), the equation $P \YB =\VB$ has a solution of the form 
\begin{equation}
\label{YB}
\YB =Y_0 + P ^{'} (P  P ^{'}) ^{-1} \VB ,
\end{equation}
where $Y_0 = Z \gamma$ such that the $n \times 2$- matrix $Z$ satisfies
$$
P Z = 0, \qquad Z^{'} Z= I_2
$$ 
with $\gamma \in L^2([0,1]) \times L^2([0,1])$ can be chosen arbitrarily, and 
\begin{equation}
\label{XB}
\XB  =Y_0 + P  ^{'} (P  P ^{'}) ^{-1} \VB + \UB.
\end{equation}
From (\ref{YB}) and (\ref{XB}), a stochastic model for $(\XB,\YB)$ is determined by models for $Y_0$ and $(\UB,\VB)$. 
 We assume
 under the following assumptions.


\medskip
{\bf Assumption (1)} $Y_0$ is deterministic.

\medskip
{\bf Assumption (2)} Let $\UB_1, \UB_2, ..., \UB_n$ be independent and identically distributed Hilbert space-valued Gaussian random variables with zero mean and covariance operator $\Sigma _u : L^2([0,1]) \rightarrow L^2([0,1])$, where the covariance operator $\Sigma_u$ is a linear, self-adjoint, positive-definite and trace class operator on $L^2([0,1])$.


\medskip
{\bf Assumption (3)} Let $\VB_1, \VB_2, ..., \VB_{n-2}$ be independent and identically distributed Hilbert space-valued Gaussian random variables with zero mean and covariance operator $\Sigma _v : L^2([0,1]) \rightarrow L^2([0,1])$, where the covariance operator $\Sigma_v$ is a linear, self-adjoint, positive-definite and trace class operator on $L^2([0,1])$.


\medskip\noindent Since the covariance operator $\Sigma_u$ is trace class and thus compact, by Riesz' Representation Theorem, there exist uniquely determined $ \mu_k>0,\, k=1,2,\ldots$, such that
\begin{equation}\label{sigma-u}
\Sigma _u h = \sum _{j=1} ^\infty  { \mu _j \langle h, e_j \rangle e_j, \qquad h \in L^2([0,1])},
 \end{equation}
where, the sum converges in the operator norm. 
Similarly for $\Sigma_v$, there exists uniquely determined $ \tau_k>0,\, k=1,2,\ldots$, such that
\begin{equation}\label{sigma-v}
\Sigma _v h = \sum _{j=1} ^\infty  {\tau_j \langle h, e_j \rangle e_j}, \qquad h \in L^2([0,1]),
 \end{equation}
where, the sum converges in the operator norm.

\medskip\noindent For each $j$, $\bar{U}_j $ is a real valued Gaussian random variable with zero mean and covariance operator $\Sigma _{\bar{U}_j} : \mathbb{R}^n \rightarrow \mathbb{R}^n$, where the covariance operator $\Sigma_{\bar{U}_j} = \mu_j I _n$. In terms of $\Sigma_{\bar{U}_j}$, we have $$\Sigma_\UB = \sum_{j=1} ^\infty {\Sigma_{\bar{U}_j} \vec{e}^{(n)}_j}.$$ 
Similarly, $\bar{V}_j $ is a real valued Gaussian random variable with zero mean and covariance operator $\Sigma _{\bar{V}_j} : \mathbb{R}^n \rightarrow \mathbb{R}^n$, where the covariance operator $\Sigma_{\bar{V}_j} = \tau_j I _{n-2}$. 


\medskip
{\bf Assumption (4)} Let $ \UB_r$ and $\VB_s$ be pairwise independent for all $r = 1,2,...,n$ and $s = 1,2,..., n-2$. 
From this assumption, it follows that $\bar{U}^j$ and $\bar{V}^j$ are independent real random variables for every $j$. 

\medskip\noindent Given Assumptions (1), (2), (3) and (4), in view of (\ref{YB}) and (\ref{XB}), it holds that $(\XB, \YB)$ is Gaussian with mean $(E(\XB),E( \YB)) = (Y_0,Y_0)$, and covariance operator 
\begin{equation}
\label{cov}
\Sigma =
 \begin{pmatrix}
  \Sigma _\XB & \Sigma _{\XB \YB} \\
  \Sigma _{\YB \XB} &\Sigma _\YB
 \end{pmatrix},
\end{equation}
where $ \Sigma _\XB, \Sigma _{\XB \YB}$ and $\Sigma _\YB$ are operators on $H^{\otimes n}$, with $\Sigma _{\XB \YB}  = \Sigma _{\YB \XB}. $

\medskip\noindent 
\begin{proposition}
Under Assumptions (1), (2), (3) and (4), we have
\[ \langle \langle \Sigma_ \XB \vec{e}^{(n)}_j ,\vec{e}^{(n)}_k \rangle \rangle = \Sigma_{\bar{X} ^j} \delta _{jk},  \quad \langle \langle \Sigma_ \YB \vec{e}^{(n)}_j ,\vec{e}^{(n)}_k \rangle \rangle = \Sigma_{\bar{Y} ^j} \delta _{jk}\]
 and 
\[ \langle \langle \Sigma_ {\XB \YB} \vec{e}^{(n)}_j ,\vec{e}^{(n)}_k \rangle \rangle = \Sigma_{\bar{X} ^j \bar{Y} ^j} \delta _{jk}.\]
with 
\begin{equation}
\label{sigmaXj}
\Sigma_{\XB} =  \Sigma _{\UB} + \Sigma_{\VB} P^{'}( P P^{'})^{-1} (P P^{'})^{-1} P 
\end{equation}
\begin{equation}
\label{sigmaYj}
\Sigma_{\YB} = \Sigma_{\VB} P^{'}( P P^{'})^{-1} (P P^{'})^{-1} P,
\end{equation}
and 
\begin{equation}
\label{sigmaXYj}
\Sigma_{\XB \YB} = \Sigma_{\VB} P^{'}( P P^{'})^{-1} (P P^{'})^{-1} P.
\end{equation}
are trace class operators.
\end{proposition}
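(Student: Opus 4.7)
The plan is to exploit the fact that the orthonormal basis $\{e_j\}$ simultaneously diagonalises $\Sigma_u$ and $\Sigma_v$ via (\ref{sigma-u}) and (\ref{sigma-v}), and therefore decouples the system (\ref{F-HP}) into countably many independent finite-dimensional problems (\ref{P-HP}). Starting from the representations (\ref{YB}) and (\ref{XB}), Assumption (1) allows me to discard $Y_0$ from the covariance computations, while Assumption (4) eliminates the cross-covariance between $\UB$ and $P'(PP')^{-1}\VB$.

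First I would do the computation fibre by fibre. For fixed $j$, by Assumptions (2), (3) and (4), the scalar vectors $\bar{U}^j\sim N(0,\mu_j I_n)$ and $\bar{V}^j\sim N(0,\tau_j I_{n-2})$ are independent, so a direct real-valued computation from the $j$-th component of (\ref{YB}), (\ref{XB}) (using that $(PP')^{-1}$ is self-adjoint on $\mathbb{R}^{n-2}$) gives
$$
\Sigma_{\bar{X}^j}=\mu_j I_n+\tau_j\, P'(PP')^{-1}(PP')^{-1}P,
$$
together with $\Sigma_{\bar{Y}^j}=\Sigma_{\bar{X}^j\bar{Y}^j}=\tau_j P'(PP')^{-1}(PP')^{-1}P$. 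These are the scalar-coefficient versions of the three displayed operator formulas.

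Next I would handle the cross-blocks. For $j\neq k$, the joint Gaussianity together with $\mathrm{Cov}(\bar{U}^j,\bar{U}^k)=0$ and $\mathrm{Cov}(\bar{V}^j,\bar{V}^k)=0$ (both immediate from (\ref{sigma-u}), (\ref{sigma-v})), combined with Assumption (4), force the off-diagonal sesquilinear forms $\langle\langle\Sigma_{\XB}\vec{e}^{(n)}_j,\vec{e}^{(n)}_k\rangle\rangle$, $\langle\langle\Sigma_{\YB}\vec{e}^{(n)}_j,\vec{e}^{(n)}_k\rangle\rangle$, $\langle\langle\Sigma_{\XB\YB}\vec{e}^{(n)}_j,\vec{e}^{(n)}_k\rangle\rangle$ to vanish, giving the first three displayed identities. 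Reassembling the fibre-wise identities via the basis expansions of $\Sigma_\UB$ and $\Sigma_\VB$, the multiplier $\tau_j$ on the $j$-th block is absorbed into $\Sigma_\VB$ while $P'(PP')^{-1}(PP')^{-1}P$ is the same finite-dimensional bounded factor on every block, yielding (\ref{sigmaXj})--(\ref{sigmaXYj}).

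Finally, for the trace-class assertion, $\Sigma_\UB$ decomposes as $\bigoplus_j \mu_j I_n$ with total trace $n\,\mathrm{tr}(\Sigma_u)<\infty$, and similarly $\Sigma_\VB$ has trace $(n-2)\,\mathrm{tr}(\Sigma_v)<\infty$; composing $\Sigma_\VB$ with the bounded finite-dimensional operator $P'(PP')^{-1}(PP')^{-1}P$ preserves the trace-class property, and the sum of two trace-class operators is trace class. The main obstacle I anticipate is making the product notation $\Sigma_\VB\,P'(PP')^{-1}(PP')^{-1}P$ rigorous as an operator on $H^{\otimes n}$: since $\Sigma_\VB$ naturally lives on $H^{\otimes(n-2)}$, one must interpret the expression through the tensor-product commutation between the within-curve covariance $\Sigma_v$ (acting as the multiplier $\tau_j$ on the $j$-th fibre) and the across-time matrix $P$ (which does not touch the $j$-index). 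Once this convention is in place, the block-wise calculation together with the decoupling across $j$ completes the argument.
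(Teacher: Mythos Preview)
Your proposal is correct and follows essentially the same route as the paper: decompose into the scalar fibres indexed by $j$, compute the finite-dimensional covariances $\Sigma_{\bar X^j}$, $\Sigma_{\bar Y^j}$, $\Sigma_{\bar X^j\bar Y^j}$ there, use independence to kill the $j\neq k$ blocks, and then reassemble. The only difference in emphasis is that the paper carries out the reassembly step more concretely---writing $\Sigma_\XB$ as an $n\times n$ block matrix of operators $\Sigma_{X_lX_m}$ on $H$ and matching the scalar coefficients $\sigma_{X_lX_m}^j$ to $\mathrm{cov}(x_l^j,x_m^j)$---whereas you invoke the block-diagonal/tensor structure directly; the obstacle you flag about $\Sigma_\VB$ living on $H^{\otimes(n-2)}$ is real and is handled in the paper only implicitly through this coefficient-matching computation.
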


\begin{proof}
For each $j$, equations (\ref{YB}) and (\ref{XB}) have the form

\begin{equation}
\label{barYj}
\bar{Y}^j =Y_0^j  +  P ^{'} (P  P ^{'}) ^{-1} \bar{V}^j ,
\end{equation}
and 
\begin{equation}
\label{barXj}
\bar{X}^j =Y_0^j  + P ^{'} (P  P ^{'}) ^{-1} \bar{V}^j + \bar{U}^j.
\end{equation}
Due to the independency assumptions, we have for every $j \neq k$:
\[
\begin{split}
\mbox{cov } (\bar{X}^j, \bar{X}^k) &= \mbox{cov } (Y_0^j  + P ^{'} (P  P ^{'}) ^{-1} \bar{V}^j + \bar{U}^j , Y_0^k  + P ^{'} (P  P ^{'}) ^{-1} \bar{V}^k + \bar{U}^k)=0,\\
\mbox{cov } (\bar{Y}^j, \bar{Y}^k) &= \mbox{cov } (Y_0^j  + P ^{'} (P  P ^{'}) ^{-1} \bar{V}^j , Y_0^k  + P ^{'} (P  P ^{'}) ^{-1} \bar{V}^k )=0,\\
\mbox{cov } (\bar{X}^j, \bar{Y}^k) &= \mbox{cov } (Y_0^j  + P ^{'} (P  P ^{'}) ^{-1} \bar{V}^j + \bar{U}^j , Y_0^k  + P ^{'} (P  P ^{'}) ^{-1} \bar{V}^k )=0.
\end{split}
\]
and
\begin{equation}
\begin{split}
\label{barxj}
 \Sigma_{\bar{X}^j}&= \mbox{cov } (Y_0^j  + P ^{'} (P  P ^{'}) ^{-1} \bar{V}^j + \bar{U}^j , Y_0^j  + P ^{'} (P  P ^{'}) ^{-1} \bar{V}^j + \bar{U}^j) \\
& = \Sigma_{\bar{V}^j} P ^{'} (P  P ^{'}) ^{-1} (P  P ^{'}) ^{-1}  P ^{'} +\Sigma_{\bar{U}^j},
\end{split}
\end{equation}
\begin{equation}
\label{baryj}
\begin{split}
\Sigma_{\bar{Y}^j} &= \mbox{cov } (Y_0^j  + P ^{'} (P  P ^{'}) ^{-1} \bar{V}^j , Y_0^k  + P ^{'} (P  P ^{'}) ^{-1} \bar{V}^k )\\
& = \Sigma_{\bar{V}^j} P ^{'} (P  P ^{'}) ^{-1} (P  P ^{'}) ^{-1}  P ^{'}, \\
\end{split}
\end{equation}
\begin{equation}
\label{barxyj}
\begin{split}
\Sigma_{\bar{X}^j \bar{Y}^j} &= \mbox{cov } (Y_0^j  + P ^{'} (P  P ^{'}) ^{-1} \bar{V}^j + \bar{U}^j , Y_0^k  + P ^{'} (P  P ^{'}) ^{-1} \bar{V}^k )\\
&= \Sigma_{\bar{V}^j} P ^{'} (P  P ^{'}) ^{-1} (P  P ^{'}) ^{-1}  P ^{'}.
\end{split}
\end{equation}
Then 
\begin{equation}
\begin{split}
\langle \langle \Sigma_ \XB \vec{e}^{(n)}_j ,\vec{e}^{(n)}_k \rangle \rangle &= E \left[\langle X -Y_0,\vec{e}^{(n)}_j \rangle \langle X-Y_0,\vec{e}^{(n)}_k \rangle \right]\\
&= E \left[(\bar{X}^j-Y_0^j) (\bar{X}^k - Y_0^k)\right] \\
&= \left\{\begin{array}{lll}
\mbox{cov } (\bar{X}^j, \bar{X}^k) =0, \quad j\neq k,\\
\Sigma_{\bar{X}^j}, \quad j=k.
\end{array}\right.
\end{split}
\end{equation}
i.e. $\langle \langle \Sigma_ \XB \vec{e}^{(n)}_j ,\vec{e}^{(n)}_k \rangle \rangle = \Sigma_{\bar{X} ^j} \delta _{jk}$. For $\Sigma_ \YB$ and $\Sigma_ {\XB \YB}$ similar proofs are hold. 

\medskip\noindent Now we will prove equation (\ref{sigmaXj}).  To this purpose, we will prove first that 
$$ \Sigma_\XB = \sum_{j=1} ^\infty { \Sigma _ {\bar{X}^j} \vec{e}_j}. $$
Since  $\Sigma_\XB : H^{\times n} \rightarrow H^{\times n} $, it has the following $n \times n$-matrix
\[
\Sigma_\XB =
 \begin{pmatrix}
  \Sigma _{X_1 X_1} & \Sigma _{X_1 X_2} & ... &\Sigma _{X_1 X_n}  \\
  \Sigma _{X_2 X_1} & \Sigma _{X_2 X_2} & ... &\Sigma _{X_2 X_n}  \\
  ... & ... & ... &...  \\
  \Sigma _{X_n X_1} & \Sigma _{X_n X_2} & ... &\Sigma _{X_n X_n}  
  \end{pmatrix},
\]
where $\Sigma _{X_l X_m} = \Sigma _{X_m X_l}: H \rightarrow H$, for all $ l,m =1,2,...,n$.  Let the operator $\Sigma _{X_l X_m}$ admit the following representation 
\begin{equation}
\label{Sigmaxlxm}
\Sigma _{X_l X_m} = \sum_{j=1} ^\infty {\sigma _{X_l X_m} ^j \langle h, e_j\rangle e_j,}
\end{equation}
where $\sigma _{X_l X_m} ^j$ are real numbers. 

 \medskip\noindent In view of (\ref{Sigmaxlxm}), we have 
\begin{equation}
\label{1}
 \langle \Sigma _{X_l X_m} e_j, e_k \rangle  =  \langle \sigma _{X_l X_m}^j e_j, e_k \rangle 
 = \left\{\begin{array}{lll}
0, \quad j\neq k,\\
\sigma _{X_l X_m}^j, \quad j=k.
\end{array}\right.
\end{equation}

 \medskip\noindent On the other hand, the operator $\Sigma_{\bar{X}^j}: \mathbb{R}^n \rightarrow \mathbb{R} ^n $ has the following $n \times n$-matrix
\[
\Sigma_{\bar{X}^j} =
 \begin{pmatrix}
  \mbox{cov } (x_1^j,x_1 ^j) & \mbox{cov } (x_1^j,x_2 ^j) & ... &\mbox{cov } (x_1^j,x_n ^j)  \\
    \mbox{cov } (x_2^j,x_1 ^j) & \mbox{cov } (x_2^j,x_2 ^j) & ... &\mbox{cov } (x_2^j,x_n ^j)  \\
  ... & ... & ... &...  \\
    \mbox{cov } (x_n^j,x_1 ^j) & \mbox{cov } (x_n^j,x_2 ^j) & ... &\mbox{cov } (x_n^j,x_n ^j)  
  \end{pmatrix},
\]
where $\mbox{cov } (x_l^j,x_m ^j)$ are real numbers, for all $l,m= 1,2,...,n$.

 \medskip\noindent Moreover,
\begin{equation}
\label{sigmaxlxm}
\begin{split}
\langle \Sigma _{X_l X_m} e_j, e_k \rangle &=  E ( \langle X_l , e_j \rangle \langle X_m , e_k \rangle) = E (x_1^j x_m^k) \\
& = \left\{\begin{array}{lll}
0, \quad j\neq k,\\
\mbox{cov} (x_l^j,x_m^j), \quad j=k,
\end{array}\right.
\end{split}
\end{equation}
where, for $j\neq k,$, $E (x_1^j x_m^k) =0$ follows directly from the fact that $\mbox{cov } (\bar{X}^j, \bar{X}^k) = 0$.

 \medskip\noindent Comparing (\ref{Sigmaxlxm}) with (\ref{sigmaxlxm}) gives that
$\sigma _{X_l X_m}^j= \mbox{cov} (x_l^j,x_m^j) $ for all $ l,m =1,2,...,n$.

\medskip\noindent Let $\vec{h} = (h,h,...,h) \in H^{\times n}$ with $\vec{h}^j = (\langle h,e_j \rangle,\langle h,e_j \rangle,...,\langle h,e_j \rangle) \in \mathbb{R}^{n}$. Then 
\[
\Sigma_{\bar{X}^j} \vec{h}^j =
 \begin{pmatrix}
  \sigma _{X_1 X_1} ^j \langle h, e_j\rangle & \sigma _{X_1 X_2} ^j \langle h, e_j\rangle & ... & \sigma _{X_1 X_n} ^j \langle h, e_j\rangle  \\
   \sigma _{X_2 X_1} ^j \langle h, e_j\rangle) & \sigma _{X_2 X_2} ^j \langle h, e_j\rangle& ... &\sigma _{X_2 X_n} ^j \langle h, e_j\rangle \\
  ... & ... & ... &...  \\
   \sigma _{X_n X_1} ^j \langle h, e_j\rangle & \sigma _{X_n X_2} ^j \langle h, e_j\rangle & ... &\sigma _{X_n X_n} ^j \langle h, e_j\rangle  
  \end{pmatrix}.
\]
Multiplying with $\vec{e} _j$ and summing up over $j$, it gives
\[
\begin{split}
\sum_{j=1} ^\infty {\Sigma_{\bar{X}^j} \vec{h}^j \vec{e}_j} & =
 \begin{pmatrix}
  \sum_{j=1} ^\infty {\sigma _{X_1 X_1} ^j \langle h, e_j\rangle e_j} &  \sum_{j=1} ^\infty {\sigma _{X_1 X_2} ^j \langle h, e_j\rangle e_j} & ... &  \sum_{j=1} ^\infty {\sigma _{X_1 X_n} ^j \langle h, e_j\rangle e_j}  \\
   \sum_{j=1} ^\infty {\sigma _{X_2 X_1} ^j \langle h, e_j\rangle e_j} &  \sum_{j=1} ^\infty {\sigma _{X_2 X_2} ^j \langle h, e_j\rangle e_j} & ... &  \sum_{j=1} ^\infty {\sigma _{X_2 X_n} ^j \langle h, e_j\rangle e_j}  \\
  ... & ... & ... &...  \\
   \sum_{j=1} ^\infty {\sigma _{X_n X_1} ^j \langle h, e_j\rangle e_j} &  \sum_{j=1} ^\infty {\sigma _{X_n X_2} ^j \langle h, e_j\rangle e_j} & ... &  \sum_{j=1} ^\infty {\sigma _{X_n X_n} ^j \langle h, e_j\rangle e_j}  
  \end{pmatrix}\\
  & = \begin{pmatrix}
  \Sigma _{X_1 X_1}  h & \Sigma _{X_1 X_2} h & ... &\Sigma _{X_1 X_n} h \\
  \Sigma _{X_2 X_1} h & \Sigma _{X_2 X_2} h & ... &\Sigma _{X_2 X_n} h   \\
  ... & ... & ... &...  \\
  \Sigma _{X_n X_1} h & \Sigma _{X_n X_2} h & ... &\Sigma _{X_n X_n}h   
  \end{pmatrix}\\
  &= \Sigma _\XB \vec{h}.
  \end{split}
\]

 \medskip\noindent Now, using (\ref{barxj}), we get 
 \[
\begin{split}
 \Sigma _\XB \vec{h} &= \sum_{j=1} ^\infty {\Sigma_{\bar{X}^j} \vec{h}^j \vec{e}_j}\\
 & = \sum_{j=1} ^\infty { \Sigma_{\bar{V}^j} P ^{'} (P  P ^{'}) ^{-1} (P  P ^{'}) ^{-1}  P ^{'} \vec{h}^j \vec{e}_j +\Sigma_{\bar{U}^j}\vec{h}^j \vec{e}_j }\\
 & = \Sigma _{\UB}\vec{h} + \Sigma_{\VB} P^{'}( P P^{'})^{-1} (P P^{'})^{-1} P \vec{h}.
 \end{split}
 \]
 i.e. equation (\ref{sigmaXj}) is true.  For (\ref{sigmaYj}) and (\ref{sigmaXYj}), similar proofs are hold. Since the operators $\Sigma_{\UB}$ and $ \Sigma_{\VB}$ are trace class, it follows that  $\Sigma_{\XB}, \Sigma_{\YB}$ and $\Sigma_{\XB \YB}$ are also trace class operators.  This finishes the proof.
\end{proof}

\medskip\noindent We will next compute the conditional expectation for jointly Gaussian random variables $\XB$, $\YB$.

\begin{proposition}
The conditional expectation of $\YB$ given $\XB$ is 
\begin{equation}
\label{conexp}
\begin{split}
E\left[ \YB | \XB \right] & = L\XB \\
& =  E[ \YB] +  \Sigma_ {\XB \YB} \Sigma _{\XB} ^{-1} \left( \XB - E [\XB]\right),
\end{split}
\end{equation}
provided that the opertor 
\begin{equation}
\label{T}
T= \Sigma_ {\XB \YB} \Sigma _{\XB} ^{-\frac{1}{2}}
\end{equation}
is Hilbert-Schmidt.
\end{proposition}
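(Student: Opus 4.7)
\bigskip\noindent\textbf{Proof plan.} The strategy is to exploit the joint Gaussianity of $(\XB,\YB)$ established in the previous proposition to reduce the problem to identifying an affine regression coefficient, and then to invoke the Hilbert-Schmidt assumption on $T$ to give a rigorous meaning to the formally unbounded operator $\Sigma_\XB^{-1}$ appearing in the regression formula.

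\medskip\noindent First, because $(\XB,\YB)$ is jointly Gaussian, the conditional expectation is an affine function of $\XB$, so there exist $a\in H^{\otimes n}$ and a (possibly densely defined) linear operator $L$ with $E[\YB\mid \XB]=a+L(\XB-E[\XB])$. Taking expectations of both sides immediately yields $a=E[\YB]$, reducing the task to identifying $L$. I would then apply the orthogonality principle: the residual $\YB-E[\YB\mid\XB]$ must be uncorrelated with $\XB-E[\XB]$, which after a short computation of cross-covariances yields the operator identity
\begin{equation}
\Sigma_{\XB\YB}=L\,\Sigma_\XB.
\end{equation}
Since $\Sigma_\XB$ is positive, self-adjoint and trace class by the previous proposition, the spectral theorem furnishes a positive square root $\Sigma_\XB^{1/2}$, and formally
\begin{equation}
L=\Sigma_{\XB\YB}\Sigma_\XB^{-1}=\bigl(\Sigma_{\XB\YB}\Sigma_\XB^{-1/2}\bigr)\Sigma_\XB^{-1/2}=T\,\Sigma_\XB^{-1/2}.
\end{equation}

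\medskip\noindent The main technical obstacle is that $\Sigma_\XB$, being trace class on an infinite dimensional space, is compact, so $\Sigma_\XB^{-1}$ is unbounded and $L\XB$ is not a priori a well-defined $H^{\otimes n}$-valued random variable. This is precisely where the hypothesis that $T$ is Hilbert-Schmidt enters: expanding $\XB-E[\XB]$ in the Karhunen--Lo\`eve basis associated with $\Sigma_\XB$, the formal whitening $\Sigma_\XB^{-1/2}(\XB-E[\XB])$ is a cylindrical standard Gaussian, and composing it with a Hilbert-Schmidt operator $T$ produces a bona fide square-integrable $H^{\otimes n}$-valued random variable, so that $L\XB$ is well-defined in $L^2(\Omega;H^{\otimes n})$. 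I would conclude by verifying that the residual $\YB-E[\YB]-L(\XB-E[\XB])$ is a centred Gaussian uncorrelated with $\XB-E[\XB]$; by joint Gaussianity, uncorrelatedness upgrades to independence, which confirms that $a+L(\XB-E[\XB])$ coincides with $E[\YB\mid \XB]$.
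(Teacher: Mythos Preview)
Your argument is correct, but it follows a genuinely different route from the paper's. The paper does not work abstractly with the orthogonality principle and the Karhunen--Lo\`eve whitening of $\XB$; instead it exploits the specific diagonal structure established in the preceding proposition. Concretely, it conditions on finitely many coordinates $\langle \XB,\vec{e}^{(n)}_1\rangle,\ldots,\langle \XB,\vec{e}^{(n)}_m\rangle$, uses the independence across $j$ to reduce to the classical finite-dimensional formula $E[\bar Y^j\mid \bar X^j]=E[\bar Y^j]+\Sigma_{\bar X^j\bar Y^j}\Sigma_{\bar X^j}^{-1}(\bar X^j-E[\bar X^j])$ coordinate by coordinate, and then passes to the limit $m\to\infty$ via a martingale convergence result (Neveu). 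The Hilbert--Schmidt property of $T$ is checked separately in this particular model rather than used as the mechanism that makes $L\XB$ meaningful.

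Your approach is more general---it would apply to any jointly Gaussian pair on a separable Hilbert space without needing a simultaneous diagonalization of $\Sigma_\XB$, $\Sigma_\YB$, $\Sigma_{\XB\YB}$---and it makes transparent \emph{why} the Hilbert--Schmidt hypothesis on $T$ is exactly what is needed to turn the cylindrical random element $\Sigma_\XB^{-1/2}(\XB-E[\XB])$ into a genuine $H^{\otimes n}$-valued variable. The paper's approach, by contrast, is more elementary in that each step is a finite-dimensional computation plus a limit, and it leans directly on the model's independence structure, which is also needed in the subsequent proofs. One small caution: your opening sentence asserts that the conditional expectation is affine in $\XB$ before constructing $L$; in infinite dimensions this is the conclusion rather than a free starting point, so it is cleaner to phrase the argument as ``define $L$ by $L=T\Sigma_\XB^{-1/2}$, show $L(\XB-E[\XB])$ is a well-defined square-integrable random element, and then verify it equals $E[\YB\mid\XB]-E[\YB]$ via independence''---which is in fact how your last paragraph proceeds.
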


\begin{proof}
Following Mandelbaum, we evaluate $ E\left[ \YB | \langle \XB, \vec{e}^{(n)}_1\rangle,..., \langle \XB, \vec{e}^{(n)}_m\rangle  \right]$. We have
\[ 
\begin{split}
 E\left[ \YB | \langle \XB, \vec{e}^{(n)}_1\rangle,..., \langle \XB, \vec{e}^{(n)}_m\rangle  \right] & = E\left[ \YB | \bar{X} ^1,..., \bar{X} ^m \right]\\
 &= \sum_{j=1} ^\infty {E\left[ \bar{Y}^j | \bar{X} ^1,..., \bar{X} ^m   \right] \vec{e}^{(n)}_j}\\
 &= \sum_{j=1} ^m {E\left[ \bar{Y}^j | \bar{X} ^j \right] \vec{e}^{(n)}_j} + \sum_{j=m+1} ^\infty {E\left[ \bar{Y}^j \right] \vec{e}^{(n)}_j}.
\end{split} 
\]
The last equality is due to the independency. But relaying on the explicit form of the conditional expectation for jointly real Gaussian random variables $\bar{Y}^j, \bar{X} ^j $, we have for every $j$
\[
E\left[ \bar{Y}^j | \bar{X} ^j \right] = E[\bar{Y}^j] + \Sigma_ {\bar{X}^j \bar{Y}^j} \Sigma _{\bar{X}^j} ^{-1} \left( \bar{X}^j - E [\bar{X}^j] \right). 
\]
This yields
\[
E\left[ \YB | \langle \XB, \vec{e}_1\rangle,..., \langle \XB, \vec{e}_m\rangle  \right] = L_m \XB,
\]
where 
\[
L_m \XB= \sum_{j=1} ^\infty {E\left[ \bar{Y}^j \right] \vec{e}^{(n)}_j} + \sum_{j=1} ^m  {\Sigma_ {\bar{X}^j \bar{Y}^j} \Sigma _{\bar{X}^j} ^{-1} \left( \bar{X}^j - E [\bar{X}^j]\right) \vec{e}^{(n)}_j}.
\]
By taking the limit when $m$ goes to infinity, where by Proposition V-2-6 in \cite{Neveu} the convergence takes place in $H^{\times n}$ a.s., we obtain the formula (\ref{conexp}) of the conditional expectation. 

\medskip\noindent It remains to prove that $T$ is a Hilbert-Schmidt operator. Since $\Sigma _{\XB}$ is injective and  trace class,  the operator $\Sigma _{\XB} ^{- \frac{1}{2}}$ is Hilbert-Schmidt. Hence, $T$ is a Hilbert-Schmidt operator, since it is a product of a trace class operator with a Hilbert-Schmidt operator.
\end{proof}

The following theorem characterizes the optimal smoothing operator as the noise-to-signal.
\begin{theorem}
Under Assumptions (1) to (4) hold, the smoothing operator 
\begin{equation}
\hat{B}h= \sum_{j=1} ^\infty  {\frac{\mu_j}{\tau_j} \langle h,e_j\rangle e_j} = \Sigma_u \Sigma_v ^{-1} h, \qquad h\in L^2([0,1]),
\end{equation}
is the minimizer of the functional 
 $$
 \mathbb {J}(B) = \left\| E[\YB|\XB] - \YB(B ,\XB) \right\|^2,
 $$
 where the minimum is taken with respect to all linear, bounded  operators which satisfy the positivity condition
 $$
\langle h , B h  \rangle_{L^2([0,1])} \ge 0, \qquad h\in L^2([0,1]).
$$ 
\end{theorem}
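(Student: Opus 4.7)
The plan is to reduce the infinite-dimensional minimization of $\mathbb{J}(B)$ to a countable family of scalar Hodrick--Prescott problems, one for each spectral frequency $j$, and then invoke the one-dimensional optimality result (\ref{opt-alpha}) of Dermoune \emph{et al.}\ (2009) in each frequency. The key observation is that every operator involved is diagonal in the basis $\{\vec e_j^{(n)}\}$: Proposition 3 gives this for $\Sigma_{\XB}$, $\Sigma_{\YB}$ and $\Sigma_{\XB\YB}$; the assumption (\ref{smoothing}) gives this for $B$; and $P'P$ acts only on the $\mathbb{R}^n$ coordinate, so $(I+BP'P)^{-1}$ likewise respects the decomposition.

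Concretely, from (\ref{YBapprox})--(\ref{YBapproxfinal}) one reads off
$\YB(B,\XB)=\sum_j \bar Y^j(\alpha_j,\bar X^j)\,\vec e_j^{(n)}$,
while combining (\ref{conexp}) with the block-diagonal statements of Proposition 3 gives
$E[\YB|\XB] = \sum_j E[\bar Y^j\,|\,\bar X^j]\,\vec e_j^{(n)}$.
A Parseval identity in $H^{\otimes n}$ then collapses the functional criterion to
\begin{equation*}
\mathbb{J}(B) \;=\; \sum_{j=1}^{\infty} \bigl\| E[\bar Y^j\,|\,\bar X^j] - \bar Y^j(\alpha_j,\bar X^j)\bigr\|_{\mathbb{R}^n}^{2}.
\end{equation*}

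Since the $j$-th summand depends only on the single real parameter $\alpha_j>0$, the infimum is attained by minimizing each summand independently. For fixed $j$, the real variables $\bar U^j$ and $\bar V^j$ have variances $\mu_j$ and $\tau_j$, so the $j$-th problem is exactly the scalar HP optimization treated in Dermoune \emph{et al.}\ (2009), whose minimizer is $\alpha_j^{\star} = \mu_j/\tau_j$. Reassembling via (\ref{smoothing}) then gives $\hat B h = \sum_j (\mu_j/\tau_j)\langle h,e_j\rangle e_j$, which by (\ref{sigma-u}) and (\ref{sigma-v}) is precisely $\Sigma_u\Sigma_v^{-1}$; the positivity constraint is immediate from $\langle h,\hat Bh\rangle = \sum_j (\mu_j/\tau_j)\,|\langle h,e_j\rangle|^2 \ge 0$.

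The step I expect to be the most delicate is the Parseval reduction itself: one must combine (\ref{conexp}) with Proposition 3 carefully to check that the off-diagonal blocks of $\Sigma_{\XB\YB}\Sigma_{\XB}^{-1}$ really do vanish and that the series expansions of $E[\YB|\XB]$ and $\YB(B,\XB)$ are orthogonal in $\{\vec e_j^{(n)}\}$, so that cross terms disappear from $\mathbb{J}(B)$. A secondary technical issue is the boundedness of $\hat B$: the formal series $\sum_j (\mu_j/\tau_j)\langle h,e_j\rangle e_j$ defines a bounded operator only if $\sup_j \mu_j/\tau_j<\infty$, so for the infimum to be \emph{attained} inside the admissible class of bounded positive operators one implicitly needs this ratio to be bounded; under that mild assumption the argument is clean and the theorem follows.
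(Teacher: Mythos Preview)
Your proposal is correct and follows essentially the same route as the paper: expand both $E[\YB|\XB]$ and $\YB(B,\XB)$ in the $\{\vec e_j^{(n)}\}$ frame, use orthogonality to write $\mathbb{J}(B)$ as a sum over $j$ of scalar HP criteria, and invoke Dermoune \emph{et al.}\ (2009) termwise to obtain $\alpha_j^\star=\mu_j/\tau_j$. Your caution about the boundedness of $\hat B$ is in fact sharper than the paper's own treatment: the paper argues via the bounded inverse theorem that $\Sigma_v^{-1}$ is bounded, but since $\Sigma_v$ is trace class (hence compact) on an infinite-dimensional space this cannot hold, so the implicit hypothesis $\sup_j \mu_j/\tau_j<\infty$ that you isolate is exactly what is needed.
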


\begin {proof}
To show that $\hat{B}$ minimizes the functional $\mathbb {J}$, let $B$  an arbitrary linear, bounded  operators which satisfy the positivity condition and prove that $\mathbb {J}(B) \geq \mathbb {J}(\hat{B})$.
\[
\begin{split}
\mathbb {J}(B) & = \left\| E[\YB|\XB] - \YB(B ,\XB) \right\|^2 \\
&= \left\| \sum_{j=1} ^\infty {E\left[ \bar{Y}^j \right] \vec{e}^{(n)}_j + \Sigma_ {\bar{X}^j \bar{Y}^j} \Sigma _{\bar{X}^j} ^{-1} \left( \bar{X}^j - E [\bar{X}^j]\right) \vec{e}^{(n)}_j} - \sum_{j=1}^\infty {\bar{Y}^j( \alpha_j, \bar{X}^j) \vec{e}^{(n)}_j} \right\|^2 \\
&= \sum_{j=1} ^\infty { \left| E\left[ \bar{Y}^j \right] + \Sigma_ {\bar{X}^j \bar{Y}^j} \Sigma _{\bar{X}^j} ^{-1} \left( \bar{X}^j - E [\bar{X}^j]\right)  - \bar{Y}^j( \alpha_j, \bar{X}^j) \right|^2 }.
\end{split}
\]
But, as a direct result of Dermoune {\it et al.} (2009), the functional
$$
\left| E\left[ \bar{Y}^j \right] + \Sigma_ {\bar{X}^j \bar{Y}^j} \Sigma _{\bar{X}^j} ^{-1} \left( \bar{X}^j - E [\bar{X}^j]\right)  - \bar{Y}^j( \alpha_j, \bar{X}^j) \right|^2
$$
has a unique minimizer $\alpha _j = \frac{\mu_j}{\tau_j}$ (which is the optimal smoothing parameter associated with the system (\ref{P-HP})).

\medskip\noindent  Hence
$$
\mathbb {J}(B) \geq \mathbb {J}(\hat{B}).
$$

\medskip\noindent it remains to show that $ \hat{B}$ is linear and bounded. The covariance operator $\Sigma _v$ is linear and bounded, since it is trace class, with $\Sigma_v > 0$. By the bounded inverse theorem \cite{Kreyszig}, $\Sigma_v ^{-1}$ exists and is bounded.  Furthermore, $\hat{B}$ is bounded, since it is a product of a trace class operator with a bounded operator.
\end{proof}

\medskip\noindent  Next we will present our main result is this paper.

\section{A consistent estimator of the noise-to-signal operator}
In this section, we propose a consistent estimator of the optimal smoothing operator $\hat {B}$, i.e. an estimator, $\mbox{est-}\hat {B} $, that converges in probability to $\hat {B}$ as the sample size tends to infinity. Due to the fact that $\hat {B}$ is the noise-to-signal ratio, it suffices to find consistent estimators of the covariance operators $\Sigma_u$ and $\Sigma _v$.

\medskip\noindent From Dermoune {\it et al.} (2008), we have, for every $j$, consistent estimators for $\mu_j$ and $\tau_j$: 
$$
\hat{\mu} _j(n) = \frac{1}{4(n-3)} \sum_{i=1}^{n-3}  { PX^j(i) PX^j(i+1)},
$$
and 
$$
\hat{\tau} _j(n) = \frac{1}{n-2}  \sum_{i=1}^{n-2} { PX^j(i) ^2 } + \frac{3}{2(n-3)}  \sum_{i=1}^{n-3} { PX^j(i) PX^j(i+1) }.
$$  

\begin{lemma}
The following statistics 
\begin{equation}
\label{sigmau-est}
\begin{split}
\hat{\Sigma}_u (n)	h &= \sum_{j=1}^\infty \hat{\mu}_j (n) \langle h,e_j \rangle e_j \\
& = \sum  _{j=1} ^\infty  \left( \frac{1}{4(n-3)} \sum_{i=1}^{n-3}  { PX^j(i) PX^j(i+1)} \right) \langle h,e_j \rangle e_j \\
& = \frac{1}{4(n-3)}  \sum_{i=1}^{n-3} { P \XB (i) P \XB (i+1) h } 
\end{split}
\end{equation}
and 
\begin{equation}
\label{sigmav-est}
\begin{split}
\hat{\Sigma}_v (n) h  &= \sum_{j=1}^\infty \hat{\tau}_j (n) \langle h,e_j \rangle e_j \\
& = \sum  _{j=1} ^\infty  \left( \frac{1}{n-2}  \sum_{i=1}^{n-2} { PX^j(i) ^2 } + \frac{3}{2(n-3)}  \sum_{i=1}^{n-3} { PX^j(i) PX^j(i+1) } \right) \langle h,e_j \rangle e_j \\
& = \frac{1}{n-2}  \sum_{i=1}^{n-2} { P \XB (i) ^2 h} + \frac{3}{2(n-3)}  \sum_{i=1}^{n-3} { P \XB (i) P \XB (i+1) h } 
\end{split}
\end{equation}
based on the time series of observation $P \XB$, are consistent estimators of the covariance operator $\Sigma_u$ and $\Sigma_v$, respectively.
\end{lemma}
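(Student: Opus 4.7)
The plan is to lift the already-established scalar consistency of $\hat\mu_j(n)$ and $\hat\tau_j(n)$ from Dermoune \emph{et al.} (2008) to operator-level convergence in probability $\hat\Sigma_u(n)\to\Sigma_u$ and $\hat\Sigma_v(n)\to\Sigma_v$, exploiting the fact that all four operators are simultaneously diagonalised by the fixed orthonormal basis $\{e_j\}$ of $L^2([0,1])$.

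First I would check that the two representations of $\hat\Sigma_u(n)$ displayed in (\ref{sigmau-est}) genuinely agree: inserting the expansion $P\XB(i)=\sum_j PX^j(i)\,e_j$ into the tensor-product form $\frac{1}{4(n-3)}\sum_i P\XB(i)\otimes P\XB(i+1)$ and using orthonormality $\langle e_j,e_k\rangle=\delta_{jk}$ shows that this operator is diagonal in $\{e_j\}$ with eigenvalues $\hat\mu_j(n)$; the same computation works for $\hat\Sigma_v(n)$. In particular $\hat\Sigma_u(n)-\Sigma_u$ is the diagonal operator with entries $\hat\mu_j(n)-\mu_j$, so in Hilbert--Schmidt norm
$$
\|\hat\Sigma_u(n)-\Sigma_u\|_{HS}^2=\sum_{j=1}^\infty\bigl(\hat\mu_j(n)-\mu_j\bigr)^2,
$$
and analogously for $\hat\Sigma_v(n)-\Sigma_v$ with $\hat\tau_j(n)-\tau_j$ in place of $\hat\mu_j(n)-\mu_j$.

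Second, for every $\varepsilon>0$ I would split the series at a truncation level $J$ and treat the two pieces separately. On the tail $j>J$ I use the crude bound $(\hat\mu_j(n)-\mu_j)^2\le 2\hat\mu_j(n)^2+2\mu_j^2$; the deterministic part $\sum_{j>J}\mu_j^2$ vanishes as $J\to\infty$ since $\Sigma_u$ is trace class, hence $(\mu_j)\in\ell^1\subset\ell^2$. Choosing $J$ first, large enough that the deterministic tail and a suitable expected-value bound on $\sum_{j>J}\hat\mu_j(n)^2$ are both smaller than $\varepsilon$ uniformly in $n$, I then let $n\to\infty$: the remaining finite head $\sum_{j\le J}(\hat\mu_j(n)-\mu_j)^2$ tends to $0$ in probability by the scalar consistency of the finitely many $\hat\mu_j(n)$, yielding $\hat\Sigma_u(n)\to\Sigma_u$ in probability in Hilbert--Schmidt norm. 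The identical argument, with $\hat\tau_j(n)$ in place of $\hat\mu_j(n)$, delivers $\hat\Sigma_v(n)\to\Sigma_v$.

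The main obstacle will be the uniform-in-$n$ control of the random tail $\sum_{j>J}\hat\mu_j(n)^2$. I expect to handle this by a second-moment calculation: $\hat\mu_j(n)$ is an explicit quadratic form in the centred Gaussian scalars $PX^j(i)=v_i^j+(Pu)_i^j$, so Isserlis' (Wick's) formula for Gaussian fourth moments yields a bound of the shape $E[\hat\mu_j(n)^2]\le \mu_j^2+C\,(\mu_j^2+\mu_j\tau_j+\tau_j^2)/n$ for an absolute constant $C$. Since Assumptions~(2) and~(3) make $(\mu_j)$ and $(\tau_j)$ summable, $E\bigl[\sum_{j>J}\hat\mu_j(n)^2\bigr]$ is itself a convergent series tending to $0$ as $J\to\infty$ uniformly for $n\ge n_0$, and Markov's inequality converts this expectation bound into the tail probability needed to close the argument; the same manipulation handles $\hat\tau_j(n)$.
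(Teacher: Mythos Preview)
Your proposal is correct, but it takes a more circuitous route than the paper. The paper bypasses the truncation argument entirely: having the explicit variance formula from Dermoune \emph{et al.} (which is exactly your Isserlis computation),
\[
\mathrm{var}(\hat\mu_j(n))=\frac{1}{16(n-3)^2}\Bigl[(n-3)(\tau_j^2+12\tau_j\mu_j+52\mu_j^2)+2(n-4)(\tau_j\mu_j+22\mu_j^2)+2(n-5)\mu_j^2\Bigr],
\]
it simply bounds, for fixed $h$,
\[
E\|\hat\Sigma_u(n)h-\Sigma_u h\|^2=\sum_{j}\mathrm{var}(\hat\mu_j(n))\,|\langle h,e_j\rangle|^2\le \|h\|^2\sum_j\mathrm{var}(\hat\mu_j(n))\longrightarrow 0,
\]
the last sum being $O(1/(n-3))$ times a convergent series (trace-class $\Sigma_u,\Sigma_v$ give $(\mu_j),(\tau_j)\in\ell^1$, hence all of $\sum\mu_j^2,\sum\tau_j^2,\sum\mu_j\tau_j$ are finite). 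This yields mean-square convergence directly, with no head/tail split and no Markov inequality.

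Your approach buys a slightly stronger operator topology (Hilbert--Schmidt rather than strong convergence applied to each $h$), but at the cost of extra machinery. Note, though, that your own moment bound $E[\hat\mu_j(n)^2]\le\mu_j^2+C(\mu_j^2+\mu_j\tau_j+\tau_j^2)/n$ already gives
\[
E\|\hat\Sigma_u(n)-\Sigma_u\|_{HS}^2=\sum_j\mathrm{var}(\hat\mu_j(n))\le \frac{C}{n}\sum_j(\mu_j^2+\mu_j\tau_j+\tau_j^2)\to 0,
\]
so the truncation step is superfluous even in your framework: once the second-moment calculation is done, the expected Hilbert--Schmidt error is summable termwise and vanishes, and you get $L^2$-convergence (hence convergence in probability) in one line.
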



\begin{proof}

We only carry out the proof of consistency for the estimator $\hat{\Sigma}_u$. A similar proof for $\hat{\Sigma}_v$ holds. 
by Proposition 2 in Dermoune {\it et al.}, we have after simple calculations 
\[
\begin{split}
\mbox{var} (\hat{\mu}_j(n))  = \frac{1}{16(n-3)^2} & \left[ (n-3) \left( \tau_j^2 +12 \tau_j \mu_j +52 \mu_j ^2 \right) \right. \\
& \qquad  \left. + 2(n-4)  \left( \tau_j \mu_j +22 \mu_j ^2 \right) + 2 (n-5)  \mu_j ^2. \right]
\end{split}
\]
Then it holds for arbitrary $h \in L^2( [0,1]) $ that
\[
\begin{split}
E \left\| \hat{\Sigma} _u (n) h - \Sigma _u h \right\| ^2 & = E \left\| \sum_{j=1} ^\infty {( \hat{\mu} _j (n) - \mu_ j)} \langle h,e_j \rangle e_j \right\| ^2 \\
& \leq \sum_{j=1} ^\infty {E \left| \hat{\mu} _j (n) - \mu_ j \right| ^2 \left\| h\right\|^2 }\\
& =  \left\| h\right\|^2 \sum_{j=1} ^\infty {\mbox{var} (\hat{\mu}_j(n))}  \rightarrow 0, \quad \mbox{ as } n \rightarrow \infty
\end{split}
\]
with 
\[ 
\begin{split} 
\sum_{j=1} ^\infty \mbox{var} (\hat{\mu}_j(n)) = \frac{1}{16(n-3)^2}\sum_{j=1} ^\infty & \left[ (n-3) \left( \tau_j^2 +12 \tau_j \mu_j +52 \mu_j ^2 \right) \right. \\
& \qquad  \left. + 2(n-4)  \left( \tau_j \mu_j +22 \mu_j ^2 \right) + 2 (n-5)  \mu_j ^2  \right]
\end{split}
\] is finite since $\Sigma_u$ and $\Sigma_v$ are trace class operators.
\end{proof}
The following theorem gives a consistent estimators of  the optimal smoothing operator $\hat{B}$. It is the functional time series extension of Theorem 1 in Dermoune {\it et al.} (2009).
\begin{theorem}
The optimal smoothing operator $\hat{B}$ admits the following consistent estimator 

\begin{equation}
\mbox{est-} \hat{B}(n)  h = - \frac{1}{4} \sum_{j=1} ^\infty { \left( \frac{3}{2} + \frac{(n-3) \sum_{i=1} ^{n-2} { PX^j(i) ^2}}{ (n-2) {\sum_{i=1}^{n-3} { PX^j(i) PX^j(i+1)}}} \right)^{-1} \langle h,e_j \rangle e_j }
\end{equation}
based on the time series of observation $P \XB$.
\end{theorem}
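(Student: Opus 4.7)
The plan is to reduce the functional statement to the scalar consistency result (\ref{alphaest}) of Dermoune \emph{et al.}~(2009) applied coordinate by coordinate, and then lift pointwise consistency of each eigenvalue to consistency of the whole operator by an $L^2$-type argument analogous to the Lemma.

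First, under the system (\ref{P-HP}) together with Assumptions~(2)--(4), for every fixed $j$ the real Gaussian time series $P\bar X^j=\bar V^j+P\bar U^j$ has exactly the joint distribution of $Px$ in the classical scalar Hodrick--Prescott model, with $\sigma_v^2$ and $\sigma_u^2$ replaced by $\tau_j$ and $\mu_j$. Applying (\ref{alphaest}) componentwise therefore yields, for each $j$,
$$
\hat\alpha_j(n) := -\frac{1}{4}\left(\frac{3}{2}+\frac{(n-3)\sum_{i=1}^{n-2} PX^j(i)^2}{(n-2)\sum_{i=1}^{n-3} PX^j(i)PX^j(i+1)}\right)^{-1} \xrightarrow[n\to\infty]{\mathbb{P}} \frac{\mu_j}{\tau_j},
$$
so that the $j$-th diagonal coefficient of $\mbox{est-}\hat B(n)$ in the basis $\{e_j\}$ is a consistent scalar estimator of the corresponding coefficient of $\hat B$. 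A short algebraic rearrangement of the displayed ratio further shows $\hat\alpha_j(n)=\hat\mu_j(n)/\hat\tau_j(n)$ with $\hat\mu_j(n),\hat\tau_j(n)$ the estimators from the Lemma, whence in the common diagonalising basis one has
$$
\mbox{est-}\hat B(n) \;=\; \hat\Sigma_u(n)\,\hat\Sigma_v(n)^{-1},
$$
exhibiting $\mbox{est-}\hat B(n)$ as the natural plug-in estimator of $\hat B=\Sigma_u\Sigma_v^{-1}$.

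Second, to pass from coordinate-wise convergence to consistency of the operator I would mimic the Lemma's variance bound: for fixed $h\in L^2([0,1])$,
$$
E\bigl\|\mbox{est-}\hat B(n)h-\hat B h\bigr\|^2 \;\le\; \|h\|^2\sum_{j=1}^\infty E\bigl|\hat\alpha_j(n)-\mu_j/\tau_j\bigr|^2,
$$
and control each summand using the joint moments of the quadratic forms $\sum PX^j(i)^2$ and $\sum PX^j(i)PX^j(i+1)$ computed in Proposition~2 of Dermoune \emph{et al.}~(2009), i.e.\ the same moment calculus that underlies the variance formula quoted in the proof of the Lemma.

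The main obstacle is the ratio structure of $\hat\alpha_j(n)$: the denominator $\hat\tau_j(n)$ is a random quadratic form that may take small values, and $\tau_j\to 0$ as $j\to\infty$ by the trace-class hypothesis, so a naive second-moment bound on $\hat\alpha_j(n)$ will not be summable in $j$. The cleanest remedy is to split the probability space according to whether $\|\hat\Sigma_v(n)-\Sigma_v\|$ is small (an event with probability tending to one by the Lemma) and, on the favourable event, to exploit the identity
$$
\hat\Sigma_u(n)\hat\Sigma_v(n)^{-1}-\Sigma_u\Sigma_v^{-1}=\bigl(\hat\Sigma_u(n)-\Sigma_u\bigr)\hat\Sigma_v(n)^{-1}+\Sigma_u\bigl(\hat\Sigma_v(n)^{-1}-\Sigma_v^{-1}\bigr),
$$
combined with the boundedness of $\Sigma_v^{-1}$ asserted at the close of the preceding theorem's proof; this delivers convergence in probability of $\mbox{est-}\hat B(n)$ to $\hat B$.
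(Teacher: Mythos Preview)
Your approach mirrors what the paper does --- or rather, what the paper merely gestures at, since the theorem is stated there without proof, relying implicitly on the preceding Lemma and the identity $\hat B=\Sigma_u\Sigma_v^{-1}$. Your coordinate-wise reduction to Dermoune \emph{et al.}~(2009) and the identification $\mbox{est-}\hat B(n)=\hat\Sigma_u(n)\hat\Sigma_v(n)^{-1}$ are exactly the intended route, and you in fact go further than the paper by recognising that the ratio structure is not innocuous.

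That said, your proposed remedy does not close the gap you yourself detected. You correctly observe that $\tau_j\to 0$ because $\Sigma_v$ is trace class, which is what makes the naive $L^2$ bound non-summable. But your fix then invokes ``the boundedness of $\Sigma_v^{-1}$ asserted at the close of the preceding theorem's proof.'' These two facts are incompatible: a compact injective operator on an infinite-dimensional Hilbert space cannot have a bounded inverse, since $\tau_j\to 0$ forces $\|\Sigma_v^{-1}\|=\sup_j \tau_j^{-1}=\infty$. The paper's own appeal to the bounded inverse theorem is itself defective (that theorem requires surjectivity, which a compact operator on an infinite-dimensional space never enjoys), so leaning on it does not rescue your argument. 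In your splitting identity
\[
\hat\Sigma_u(n)\hat\Sigma_v(n)^{-1}-\Sigma_u\Sigma_v^{-1}=\bigl(\hat\Sigma_u(n)-\Sigma_u\bigr)\hat\Sigma_v(n)^{-1}+\Sigma_u\bigl(\hat\Sigma_v(n)^{-1}-\Sigma_v^{-1}\bigr),
\]
both terms on the right contain an unbounded factor even on the favourable event $\{\|\hat\Sigma_v(n)-\Sigma_v\|\text{ small}\}$, so operator-norm convergence in probability does not follow. A complete argument would require either an extra hypothesis (for instance $\inf_j\tau_j>0$, which collapses the setting to finite dimension, or summable control on $\mu_j/\tau_j^2$), or a retreat to a weaker, coordinate-wise notion of consistency --- neither of which the paper supplies.
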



\section*{Acknowledgment}
I would like to thank Professor Boualem Djehiche and Associate Professor Astrid Hilbert for the fruitful discussions, the guidance and the advice they have provided me.


\end{document}